\documentclass{article}
\usepackage[english]{babel}
\usepackage[cp1251]{inputenc}
\usepackage{amsmath}
\usepackage{amssymb}
\usepackage{amsfonts}
\usepackage{amsthm}
\usepackage{graphicx}

\pagestyle{myheadings}

\def\thtext#1{
  \catcode`@=11
  \gdef\@thmcountersep{. #1}
  \catcode`@=12
}

\def\threst{
  \catcode`@=11
  \gdef\@thmcountersep{.}
  \catcode`@=12
}

 \catcode`@=11
 \def\.{.\spacefactor\@m}
 \catcode`@=12

\theoremstyle{plain}
\newtheorem{thm}{Theorem}[section]
\newtheorem{prop}{Proposition}[section]
\newtheorem{cor}[prop]{Corollary}

\theoremstyle{definition}
\newtheorem{rk}[prop]{Remark}

\newcommand{\cM}{\mathcal{M}}

\newcommand{\N}{\mathbb{N}}
\newcommand{\R}{\mathbb{R}}

\newcommand{\dl}{\delta}
\newcommand{\D}{\Delta}

\newcommand{\diam}{\operatorname{diam}}

\newcommand{\gen}{{\operatorname{gen}}}

\newcommand{\mst}{{\operatorname{mst}}}

\newcommand{\mf}{{\operatorname{mf}}}

\newcommand{\smt}{{\operatorname{smt}}}
\newcommand{\SMT}{{\operatorname{SMT}}}
\newcommand{\sr}{{\operatorname{sr}}}
\newcommand{\ssr}{{\operatorname{ssr}}}
\newcommand{\sgr}{{\operatorname{sgr}}}

\newcommand{\0}{\emptyset}
\renewcommand{\:}{\colon}

\renewcommand{\sp}{\supset}
\renewcommand{\ss}{\subset}

\begin{document}
\title{Steiner Ratio and Steiner--Gromov Ratio of Gromov--Hausdorff Space}
\author{Alexander O.Ivanov, \and Alexey A.Tuzhilin}
\date{}
\maketitle

\begin{abstract}
In the present paper we investigate the metric space $\cM$ consisting of isometry classes of compact metric spaces, endowed with the Gromov--Hausdorff metric. We show that for any finite subset $M$ from a sufficiently small neighborhood of a generic finite metric space, providing $M$ consists of finite metric spaces with the same number of points, each Steiner minimal tree in $\cM$ connecting $M$ is a minimal filling for $M$. As a consequence, we prove that the both Steiner ratio and Gromov--Steiner ratio of $\cM$ are equal to $1/2$.
\end{abstract}

\section{Introduction}
\markright{\thesection.~Introduction}
By $\cM$ we denote the set of all isometry classes of compact metric spaces, endowed with the Gromov--Hausdorff metric. In~\cite{IvaIliadisTuzIsom} we defined generic spaces as finite metric spaces such that all their nonzero distances are pairwise distinct, and all their triangle inequalities hold strictly. Such spaces form an everywhere dense, but not open subset of $\cM$. We have shown that any sufficiently small neighborhood of a generic $n$-point compact metric space, being restricted to the family of all $n$-point metric spaces, is isometric to an open set in some $\R^k_{\infty}$, i.e., in $\R^k$ with the norm $\bigl\|(x_1,\ldots,x_k)\bigr\|=\max\bigl\{|x_i|\bigr\}$. By means of these isometries, together with isometric Kuratowski embedding of a $k$-point metric space into $\R^k_{\infty}$, we have shown that each finite metric space can be isometrically embedded into $\cM$. In the present paper we use these results to show that for the boundaries which belong to sufficiently small neighborhoods of the generic spaces, and consist of finite metric spaces with the same number of points, all Steiner minimal trees are minimal fillings, see~\cite{ITMinFil} for definitions and details. Generalizing~\cite{IvaIliadisTuzIsom}, we get, as a consequence, that all minimal fillings of finite metric spaces can be realized in $\cM$ as Steiner minimal trees. The latter immediately implies that the both Steiner ratio and Gromov--Steiner ratio of the space $\cM$ are equal to $1/2$.

\section{Preliminaries}
\markright{\thesection.~Preliminaries}
Let $X$ be an arbitrary metric space.  By $|xy|$ we denote the distance between points $x$ and $y$ in $X$. For each point $x\in X$ and a real number $r>0$ by $U_r(x)$ we denote  an open ball of radius $r$  centered at $x$; for every nonempty $A\ss X$ and a real number $r>0$ put $U_r(A)=\cup_{a\in A}U_r(a)$.

For nonempty $A,\,B\ss X$ put
$$
d_H(A,B)=\inf\bigl\{r>0:A\ss U_r(B)\&B\ss U_r(A)\bigr\}.
$$
This value is called the \emph{Hausdorff distance between $A$ and $B$}. It is well-known~\cite{BurBurIva} that the Hausdorff distance is a metric on the set of all closed bounded subsets of $X$.

Let $X$ and $Y$ be metric spaces. A triple $(X',Y',Z)$ consisting of a metric space $Z$ and two its subsets $X'$ and $Y'$ isometrical to $X$ and $Y$, respectively, is called a \emph{realization of the pair $(X,Y)$}. The \emph{Gromov--Hausdorff distance $d_{GH}(X,Y)$ between $X$ and $Y$} is the greatest lower bound of the real numbers $r$ such that there exists a realization $(X',Y',Z)$ of the pair $(X,Y)$ that satisfies $d_H(X',Y')\le r$. It is well-known~\cite{BurBurIva} that the $d_{GH}$ is a metric on the set $\cM$ of isometry classes of compact metric spaces.

Let $G=(V,E)$ be an arbitrary graph with vertex set $V$ and edge set $E$. We say that the graph $G$ is \emph{defined on a metric space $X$} if $V\ss X$. For any such a graph the \emph{length $|e|$} of any its \emph{edge $e=vw$} is defined as the distance $|vw|$ between the ends $v$ and $w$ of the edge; the latter generates the \emph{length $|G|$} of the \emph{graph $G$} as the sum of the lengths of all its edges.

If $M\ss X$ is an arbitrary finite nonempty subset, and $G=(V,E)$ is a graph on $X$, then we say that $G$ \emph{connects $M$} if $M\ss V$. The greatest lower bound of the length of connected graphs connecting $M\ss X$ is called the \emph{length of Steiner minimal tree on $M$}, or the \emph{length of the shortest tree on $M$}, and it is denoted by $\smt(M,X)$. Each connected graph $G$ that connects $M$ and such that $|G|=\smt(M,X)$ is a tree which is called a \emph{Steiner minimal tree on $M$}, or a \emph{shortest tree on $M$}. By $\SMT(M,X)$ we denote the set of all Steiner minimal trees on $M$. Notice that the set $\SMT(M,X)$ may be empty, and that the both $\SMT(M,X)$ and $\smt(M,X)$ depend not only on the distances between points in $M$, but also on geometry of the ambient space $X$, e.g., isometrical $M$ belonging to distinct metric spaces $X$ may be connected by shortest trees of nonequal lengths. For details of the shortest trees theory see~\cite{IvaTuz} and~\cite{HwRiW}.

\begin{prop}[\cite{IvaTuz}]\label{prop:existance}
If $X$ is a complete Riemannian manifold, or finite-dimensional normed vector space, then for every finite nonempty set $M\ss X$ it holds $\SMT(M,X)\ne\0$.
\end{prop}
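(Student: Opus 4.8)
The plan is to combine a standard combinatorial reduction with a straightforward compactness argument. Write $n=\#M$ and fix an enumeration $M=\{x_1,\dots,x_n\}$; the cases $n\le 1$ are trivial, so assume $n\ge 2$. First I would bound the length: a minimum spanning tree on $M$ has finite length $L:=\mst(M,X)$, and obviously $\smt(M,X)\le L<\infty$. Next, since deleting an edge of a cycle of a connected graph keeps it connected without increasing its length, $\smt(M,X)$ equals the infimum of $|T|$ over \emph{trees} $T$ connecting $M$; and on any such tree one may repeatedly (i) delete a degree-$1$ vertex not in $M$ together with its edge, (ii) suppress a degree-$2$ vertex not in $M$, replacing its two edges by one edge joining their other endpoints (this does not lengthen the tree, by the triangle inequality), and (iii) identify two adjacent coincident vertices. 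After finitely many such moves one reaches a \emph{reduced} tree, of no greater length, in which the vertices outside $M$ are pairwise distinct and have degree $\ge 3$; a degree count (the degree sum equals $2(n+k-1)$ and is at least $n+3k$) then bounds their number $k$ by $n-2$ and the number of edges by $2n-3$. Consequently $\smt(M,X)$ is the infimum of $|T|$ over the \emph{finite} family $\mathcal T$ of combinatorial tree types with vertex set $M\sqcup\{s_1,\dots,s_k\}$, $k\le n-2$, the points of $M$ being fixed and $s_1,\dots,s_k$ free.

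Second, I would confine the free vertices to a compact set. If a tree $T$ connects $M$ with $|T|\le L$, then every vertex $v$ of $T$ is joined to $x_1$ by a path in $T$ of length at most $L$, so $|vx_1|\le L$; thus $v$ lies in the closed bounded set $K:=\{x\in X:|xx_1|\le L\}$. This is the only place the hypothesis on $X$ is used: in a complete Riemannian manifold, closed bounded subsets are compact by the Hopf--Rinow theorem, and in a finite-dimensional normed vector space they are compact by the Heine--Borel property. Either way $K$ is compact.

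Third, for each type $\tau\in\mathcal T$ with free vertices $s_1,\dots,s_k$ and edge set $E_\tau$, the assignment $\ell_\tau(s_1,\dots,s_k)=\sum_{vw\in E_\tau}|vw|$ (each endpoint being some fixed $x_i$ or some $s_j$) is a continuous function on the compact set $K^k$, because the metric of $X$ is continuous; hence it attains its minimum. Taking the least of these finitely many minima and invoking the previous steps, one obtains a configuration whose graph $G$ connects $M$ and satisfies $|G|=\smt(M,X)$; after identifying any coincident vertices it is a tree on $X$ of length $\smt(M,X)$, and therefore $\SMT(M,X)\ne\0$.

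I expect the only genuinely delicate point to be the first step: verifying carefully that restricting attention to the finite family of bounded-size reduced trees confined to the compact set $K$ does not change the infimum $\smt(M,X)$ (the reduction moves must be checked not to increase length and to terminate, and one must see that a near-optimal competitor can be so reduced). Once this a priori description of near-optimal competitors is in hand, the existence of a minimizer is exactly the elementary fact that a continuous function on a nonempty compact set attains its infimum.
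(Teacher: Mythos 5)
The paper does not prove this proposition: it is stated as a known result imported from~\cite{IvaTuz}, so there is no internal proof to compare against. Your argument is the standard existence proof for Steiner minimal trees --- reduction to finitely many combinatorial types with at most $n-2$ Steiner points of degree at least $3$, confinement of all vertices to a closed bounded set that is compact by Hopf--Rinow (complete Riemannian case) or Heine--Borel (finite-dimensional normed case), and minimization of finitely many continuous length functions on a compact set --- which is essentially the proof in the cited reference, and it is correct; the one technical point, which you already flag, is handling degenerate minimizers with coincident vertices, and that is routine.
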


Let $M$ be a finite metric space. Define $\mst(M)$ as the length of the shortest tree of the form $(M,E)$. This value is called the \emph{length of minimal spanning tree on $M$}; a tree $G=(M,E)$ with $|G|=\mst(M)$ is called a \emph{minimal spanning tree on $M$}. Notice that for any $M$ there always exists a minimal spanning tree on it.

Now we fix a metric space $M$, consider isometrical embeddings of $M$ into various metric spaces $X$, and for every such embedding calculate the length of Steiner minimal tree on the image of $M$. The ``shortest'' length of these Steiner minimal trees over all embeddings we call the \emph{length of minimal filling of $M$}, and this value is denoted by $\mf(M)$. To overcome the problems like Cantor's paradox, let us give a more accurate definition. The number $\mf(M)$ is the greatest lower bound of the real numbers $r$ such that there exist a metric space $X$ and an isometric embedding $\mu\:M\to X$ with $\smt\bigl(\mu(M),X\bigr)\le r$. If $\SMT\bigl(\mu(M),X\bigr)\ne\0$ and $\smt\bigl(\mu(M),X\bigr)=\mf(M)$, then each $G\in\SMT\bigl(\mu(M),X\bigr)$ is called a \emph{minimal filling for $M$}. In~\cite{ITMinFil} we have given an equivalent definition of minimal filling, which does not use ambient spaces $X$, and we have shown that for each finite metric space $M$ a minimal filling always exists.

The next result can be easily obtained from~\cite{ITMinFil}.

\begin{prop}[\cite{ITMinFil}]
For any finite metric space $M$, all its minimal fillings are just the graphs $G=(V,E)$ on a metric space $V$ extending $M$, such that $|G|=\mst(V)=\mf(M)$.
\end{prop}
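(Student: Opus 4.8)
The plan is to prove the claimed description directly from the two definitions of $\mf$ involved, using as the only external input the existence theorem of~\cite{ITMinFil} (which guarantees that the family on the right-hand side is nonempty, so that the statement is not vacuous) together with the convention of~\cite{ITMinFil} that a \emph{filling} of $M$ is a connected graph joining $M$. The one elementary observation used in both directions is that for any finite metric space $V$ extending $M$ one has
$$
\mf(M)\le\smt(M,V)\le\mst(V),
$$
where the left inequality holds because the pair consisting of $V$ and the inclusion $M\hookrightarrow V$ is among those over which the infimum defining $\mf(M)$ is taken, and the right inequality holds because a minimal spanning tree on $V$ is in particular a connected graph joining $M$ in $V$.

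For the inclusion ``every minimal filling has this form'', I would start from a minimal filling $G$: by definition $G\in\SMT\bigl(\mu(M),X\bigr)$ for some isometric embedding $\mu\:M\to X$, and $|G|=\mf(M)$. Let $V$ be the (finite) vertex set of $G$ with the metric inherited from $X$; then $V$ is a finite metric space extending $M$ once we identify $M$ with $\mu(M)$, and $G=(V,E)$ viewed as a graph on $V$ has the same length $|G|$, since the lengths of its edges are distances in $X$ unchanged by restriction to $V$. As $G=(V,E)$ is a spanning tree of $V$, we get $|G|\ge\mst(V)$; combining this with the observation above applied to this $V$ gives $\mf(M)=|G|\ge\mst(V)\ge\mf(M)$, so $|G|=\mst(V)=\mf(M)$.

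For the reverse inclusion, I would take $G=(V,E)$ a connected graph on a finite metric space $V$ extending $M$ with $|G|=\mst(V)=\mf(M)$; since edge lengths are positive and $|G|=\mst(V)$, the graph $G$ is in fact a spanning tree of $V$. Putting $X=V$ and $\mu\:M\to V$ the inclusion, the tree $G$ joins $M$ in $V$, so $\smt(M,V)\le|G|=\mf(M)$, while the observation above gives $\mf(M)\le\smt(M,V)$. Hence $\smt(M,V)=\mf(M)$ and $|G|=\smt(M,V)$, so $G\in\SMT(M,V)$; together with $\smt(M,V)=\mf(M)$ this means precisely that $G$ is a minimal filling of $M$.

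I do not anticipate a serious obstacle: the argument is essentially bookkeeping that translates between the ambient-space definition of $\mf(M)$ and the intrinsic one of~\cite{ITMinFil}. The points requiring the most care are avoiding circularity — note that in the second direction one never uses attainment of the infimum defining $\mf(M)$, only the trivial bound above and the hypothesis $|G|=\mf(M)$ — and checking that the metric space attached to the vertex set of a Steiner minimal tree in an ambient $X$ genuinely extends $M$ with edge lengths preserved, so that $\mst(V)$ can legitimately be squeezed between $|G|$ from above and $\mf(M)$ from below.
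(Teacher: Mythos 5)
The paper states this proposition as a result imported from~\cite{ITMinFil} and gives no proof of its own, so there is nothing to compare against; judged on its own terms, your argument is correct. The two-sided squeeze $\mf(M)\le\smt(M,V)\le\mst(V)$ for any finite $V$ extending $M$ is exactly the right tool, and both directions (extracting $V$ as the vertex set of a minimal filling with the induced metric, and conversely realizing a graph with $|G|=\mst(V)=\mf(M)$ as a Steiner minimal tree in the ambient space $X=V$ itself) are carried out without circularity. The only point worth flagging is cosmetic: the proposition as literally stated does not say that $G$ is connected, so your explicit appeal to the convention of~\cite{ITMinFil} that a filling is by definition a connected graph joining $M$ is needed to rule out degenerate disconnected graphs that happen to have length $\mst(V)$; with that convention in place, the proof is complete.
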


Let $M$ be an arbitrary subset of a metric space $X$, and suppose that $M$ consists of at least two points. Then $\smt(M,X)>0$, $\mf(M)>0$, and $\mst(M)>0$. The next three values characterize the relations between the three types of trees defined above:
\begin{itemize}
\item the \emph{Steiner ratio\/} $\sr(M,X)=\smt(M,X)/\mst(M)$;
\item the \emph{Steiner--Gromov ratio\/} $\sgr(M)=\mf(M)/\mst(M)$;
\item the \emph{Steiner subratio\/} $\ssr(M,X)=\mf(M)/\smt(M,X)$.
\end{itemize}
If we take the greatest lower bounds of the ratios defined above over all such $M\ss X$, then we get the corresponding ratios for the ambient space $X$. Notice that the calculation of these ratios is a very difficult problem. For details on one-dimensional minimal fillings see~\cite{ITMinFil}.

By $\R^n_\infty$ we denote  the space $\R^n$ with the norm
$$
\bigl\|(x_1,\ldots,x_n)\bigr\|=\max\bigl\{|x_i|\bigr\}.
$$
Recall that every finite metric space can be isometrically embedded into some $\R^n_\infty$ by means of \emph{Kuratowski embedding}, that definition we give in the next Proposition.

\begin{prop}[\cite{Kuratowski}]\label{prop:Kuratowski}
The mapping from a metric space $X=\{x_1,\ldots,x_n\}$ into $\R^n_\infty$ defined as
$$
\chi\:x_i\mapsto\bigl(|x_1x_i|,\ldots,|x_nx_i|\bigr)
$$
is an isometrical embedding.
\end{prop}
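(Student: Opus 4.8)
The plan is to check directly that $\chi$ is distance-preserving; since the metric on $X$ separates points, distance-preservation automatically forces injectivity, so that a map preserving distances is indeed an isometric embedding and nothing else needs to be verified. Thus I would fix an arbitrary pair of points $x_i,x_j\in X$ and compute $\bigl\|\chi(x_i)-\chi(x_j)\bigr\|$ straight from the definition of the $\R^n_\infty$-norm. Writing $\chi(x_i)-\chi(x_j)$ coordinatewise, its $k$-th coordinate is $|x_kx_i|-|x_kx_j|$, so
$$
\bigl\|\chi(x_i)-\chi(x_j)\bigr\|=\max_{1\le k\le n}\bigl||x_kx_i|-|x_kx_j|\bigr|,
$$
and it remains only to identify this maximum with $|x_ix_j|$.

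Both halves of that identification come straight from the triangle inequality in $X$. For the estimate ``$\le$'', the reverse triangle inequality $\bigl||x_kx_i|-|x_kx_j|\bigr|\le|x_ix_j|$ holds for every index $k$, hence the maximum over $k$ does not exceed $|x_ix_j|$. For the estimate ``$\ge$'', I would simply observe that the index $k=i$ already realises the bound, since $\bigl||x_ix_i|-|x_ix_j|\bigr|=|x_ix_j|$ (the index $k=j$ serves equally well). Combining the two inequalities gives $\bigl\|\chi(x_i)-\chi(x_j)\bigr\|=|x_ix_j|$, which is exactly the isometry condition, and this holds for all $i,j$.

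There is no genuine obstacle in this argument; the only points deserving a word of care are that finiteness of $n$ guarantees the maximum defining the norm is attained and the target is an honest finite-dimensional space $\R^n_\infty$, and that the ``$\ge$'' direction really does require exhibiting a specific coordinate rather than an abstract estimate. One could equivalently phrase the proof by noting that each coordinate function $x\mapsto|x_kx|$ is $1$-Lipschitz on $X$ and vanishes at $x_k$, so the family $\{\,x\mapsto|x_kx|\,\}_{k=1}^n$ forms a distance-preserving system of coordinates; but the one-line computation above is the most economical route.
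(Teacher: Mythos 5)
Your proof is correct and complete: the reverse triangle inequality gives the upper bound $\max_k\bigl||x_kx_i|-|x_kx_j|\bigr|\le|x_ix_j|$, and the coordinate $k=i$ attains it, which is exactly the standard argument for the Kuratowski embedding. The paper states this proposition as a cited classical result without proof, so there is nothing to compare against; your argument is the canonical one and fills that gap correctly.
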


\begin{prop}[Z.N.Ovsyannikov, \cite{Ovs}, \cite{IvaTuzIrreducible}]\label{prop:Ovsyannikov}
Each shortest tree in $\R^n_\infty$ is a minimal filling for its boundary.
\end{prop}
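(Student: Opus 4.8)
The plan is to reduce the statement to the single equality $\smt(M,\R^n_\infty)=\mf(M)$, where $G\in\SMT(M,\R^n_\infty)$ is the given shortest tree and $M\ss\R^n_\infty$ is its (finite) boundary. Once this equality is in hand, Proposition~\ref{prop:existance} guarantees $\SMT(M,\R^n_\infty)\ne\0$, and then the definition of a minimal filling --- read with ambient space $\R^n_\infty$ and $\mu$ the inclusion $M\hookrightarrow\R^n_\infty$ --- says precisely that every element of $\SMT(M,\R^n_\infty)$, in particular $G$, is a minimal filling for $M$. One of the two inequalities is for free: $\mf(M)\le\smt(M,\R^n_\infty)$ holds by the definition of $\mf$, taking the ambient space to be $\R^n_\infty$ itself and $\mu$ the inclusion. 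So the real task is the reverse bound $\smt(M,\R^n_\infty)\le\mf(M)$, i.e., to show that $\R^n_\infty$ is ``large enough'' to realize a minimal filling of $M$ as a tree connecting $M$ inside $\R^n_\infty$.

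For that I would take a minimal filling of $M$ in the explicit form recalled above (following~\cite{ITMinFil}): a tree $G'=(V',E')$ on a finite metric space $V'\sp M$ extending the metric of $M$, with $|G'|=\mst(V')=\mf(M)$; such a $G'$ exists by~\cite{ITMinFil}. The goal is then to transport $G'$ back into $\R^n_\infty$ without increasing its length. The tool is that $\R^n_\infty$ is an injective (hyperconvex) metric space: the isometric inclusion $M\hookrightarrow\R^n_\infty$ extends to a $1$-Lipschitz map $f\colon V'\to\R^n_\infty$ that is the identity on $M$. Explicitly, if the inclusion is $m\mapsto(\iota_1(m),\ldots,\iota_n(m))$, then each $\iota_j\colon M\to\R$ is $1$-Lipschitz for the metric of $M$, so the McShane--Whitney formula $\tilde\iota_j(x)=\inf_{m\in M}\bigl(\iota_j(m)+|xm|\bigr)$ gives a $1$-Lipschitz extension of $\iota_j$ to $V'$ agreeing with $\iota_j$ on $M$, and $f=(\tilde\iota_1,\ldots,\tilde\iota_n)$ is $1$-Lipschitz into $\R^n_\infty$ and restricts to the inclusion on $M$.

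It then remains to push $G'$ forward. Let $H$ be the graph on $\R^n_\infty$ with vertex set $f(V')$ and an edge $f(v)f(w)$ for every $vw\in E'$. Since $G'$ is connected and $f$ restricts to the identity on $M$, the graph $H$ is connected and connects $M$; and since $f$ is $1$-Lipschitz, $|H|\le\sum_{vw\in E'}|f(v)f(w)|\le\sum_{vw\in E'}|vw|=|G'|=\mf(M)$. Hence $\smt(M,\R^n_\infty)\le|H|\le\mf(M)$, which combined with the free inequality yields $\smt(M,\R^n_\infty)=\mf(M)$, completing the argument. The only ingredient that is not mere bookkeeping with the definitions of $\smt$, $\mst$ and $\mf$ is the injectivity of $\R^n_\infty$ --- equivalently, the coordinatewise $1$-Lipschitz extension of functions given on a subspace --- and that is the step I expect to be the crux of the proof.
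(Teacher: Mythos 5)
Your proof is correct. The paper itself offers no proof of this proposition --- it is imported from \cite{Ovs} and \cite{IvaTuzIrreducible} --- but your route, reducing the claim to the equality $\smt(M,\R^n_\infty)=\mf(M)$ and obtaining the nontrivial inequality from the injectivity of $\R^n_\infty$ via the coordinatewise McShane--Whitney extension of the inclusion $M\hookrightarrow\R^n_\infty$ to a $1$-Lipschitz map on the vertex set of an abstract minimal filling, is precisely the standard argument and, in substance, the one given in the cited reference.
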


By $\cM_{[n]}$ we denote the subset of $\cM$ consisting of all metric spaces each of which consists of exactly $n$ points. Put $\cM_n=\cup_{1\le k\le n}\cM_{[k]}$.

The next Proposition states the existence of shortest trees in $\cM$ for boundaries consisting of finite metric spaces.

\begin{prop}[\cite{IvaNikolaevaTuzSteiner}]\label{prop:SMTexistance}
For every $M\ss\cM_n$ it holds
$$
\SMT(M,\cM)\ne\0.
$$
\end{prop}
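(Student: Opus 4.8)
The plan is to prove $\SMT(M,\cM)\ne\0$ by the direct method: construct a length-minimizing sequence of connecting trees, extract a limit tree, and check that it attains the infimum. I would begin with the standard combinatorial reduction. Let $k$ be the number of points of $M$. A connected graph on $\cM$ of minimal length connecting $M$ is a tree, and after suppressing non-terminal vertices of degree $2$ and deleting non-terminal leaves (neither operation increases the length) one may assume every leaf is a point of $M$ and every other vertex has degree at least $3$; such a tree has at most $k-2$ Steiner (non-terminal) vertices and at most $2k-3$ edges, and there are only finitely many combinatorial types $T$ of such trees with leaves labelled by the elements of $M$. Hence $\smt(M,\cM)=\min_T\smt_T(M,\cM)$, where $\smt_T(M,\cM)$ denotes the infimum of the length over all networks of type $T$ connecting $M$, and it suffices to show that each $\smt_T(M,\cM)$ is attained.

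Fix $T$ and pick networks $G_i$ of type $T$ connecting $M$ with $|G_i|\to\smt_T(M,\cM)=:\ell$. Let $Z^i_1,\dots,Z^i_s$, $s\le k-2$, be the Steiner vertices of $G_i$; these are compact metric spaces, and since $G_i$ is connected and contains $M$, each $Z^i_j$ is joined to a fixed terminal $X_1$ by a path of total length at most $|G_i|$, so $d_{GH}(Z^i_j,X_1)\le\ell+1$ for large $i$ and hence $\diam Z^i_j\le\diam X_1+2(\ell+1)$ is uniformly bounded. The obstruction is that bounded distance does not suffice to extract a convergent subsequence, because $\cM$ is not locally compact and its closed balls are not compact. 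The key step is therefore to prove that for the purpose of computing $\smt_T(M,\cM)$ the Steiner vertices may be taken to be finite metric spaces with at most $N$ points, for some $N$ depending only on $n$ and $k$; equivalently $\smt_T(M,\cM)=\smt_T(M,\cM_N)$. One half of this is easy: replacing a Steiner vertex $Z$ by a finite $\e$-net $F\ss Z$ alters every incident edge length by at most $\e$ (triangle inequality for $d_{GH}$), so the infimum is unchanged if Steiner vertices are merely required to be finite. The substantive half is the a priori bound on the cardinality, and this is the step I expect to be the main obstacle: I would attack it by examining an almost-minimizing network of type $T$, choosing for each edge an almost-optimal realization of the pair of its endpoints and reconciling these along the shared vertices so as to re-assemble each Steiner vertex inside a common finite ambient space whose size is controlled by $n$ and $k$; here one must exploit the special structure of $\cM$---the Kuratowski embedding (Proposition~\ref{prop:Kuratowski}) and the local model of neighbourhoods of finite spaces from \cite{IvaIliadisTuzIsom}---rather than argue in an abstract metric space.

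Granting that reduction, compactness is restored. The set of metric spaces supported on a fixed finite set of at most $N$ points with all distances bounded by a constant $D$ is the image of the compact set of such $[0,D]$-valued (semi)metrics under the $\frac12$-Lipschitz map taking a (semi)metric to the point of $\cM$ it represents, hence is relatively compact; therefore the bounded part of $\cM_N$ is relatively compact and $\cM_N$ is proper. Applying this to the uniformly bounded, at most $N$-point Steiner vertices of the minimizing sequence and passing to a subsequence, $Z^i_j\to Z^\infty_j$ in $\cM$ for each $j$, and by continuity of $d_{GH}$ every edge length of $G_i$ converges to that of the network $G^\infty$ of type $T$ with Steiner vertices $Z^\infty_1,\dots,Z^\infty_s$; thus $G^\infty$ connects $M$ and $|G^\infty|=\lim|G_i|=\smt_T(M,\cM)$. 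A routine cleanup---contracting edges of length $0$, whose endpoints coincide as points of $\cM$ and cannot be two distinct terminals---turns $G^\infty$ into a genuine tree on $\cM$ connecting $M$ of length $\smt_T(M,\cM)$. Doing this for every $T$ and using $\smt(M,\cM)=\min_T\smt_T(M,\cM)$ over the finite family of topologies, the tree attached to a minimizing $T$ lies in $\SMT(M,\cM)$, which is therefore nonempty.
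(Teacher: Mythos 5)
First, a caveat: the paper does not prove this proposition at all --- it is imported verbatim from \cite{IvaNikolaevaTuzSteiner} --- so there is no in-document proof to compare yours against. Judged on its own merits, your plan is the standard and, as far as the cited source goes, the correct one: reduce to finitely many labelled tree topologies, argue that the Steiner vertices may be taken to be finite metric spaces with a uniformly bounded number of points, and then run the direct method using compactness of the bounded part of $\cM_N$. The peripheral steps you actually carry out are all sound: the combinatorial reduction and the identity $\smt(M,\cM)=\min_T\smt_T(M,\cM)$; the $\e$-net replacement showing that Steiner vertices may be assumed finite at negligible cost; the $\frac12$-Lipschitz parametrization of at-most-$N$-point spaces by $[0,D]$-valued semimetric matrices, which gives the needed compactness; the continuity of edge lengths under Gromov--Hausdorff convergence; and the cleanup of zero-length edges in the limit.

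There is, however, a genuine gap, and it sits exactly at the one step carrying all the difficulty: the uniform bound $N$ on the cardinality of the Steiner vertices. You correctly diagnose that without it the argument collapses, since bounded subsets of $\cM$ --- and even the set of all finite metric spaces of bounded diameter, which is dense in the set of all compact spaces of that diameter --- are not totally bounded, so no subsequence of the $Z^i_j$ need converge. But your sketch of how to obtain the bound (``reconciling almost-optimal realizations of the edges along shared vertices inside a common finite ambient space'') does not yet contain a mechanism that caps the number of points: assembling approximate realizations controls distances, not cardinalities, and a priori the Steiner vertices of a minimizing sequence could require ever finer nets, hence ever more points. What is needed is a lemma allowing one to replace each Steiner vertex by a space whose cardinality is bounded in terms of $n$ and the number of terminals without increasing any incident edge length --- in the literature this is done via irreducible correspondences between finite metric spaces, whose size is controlled by the sizes of the two endpoint spaces (cf.\ \cite{IvaTuzIrreducible}) --- or, alternatively, a uniform total-boundedness estimate in the spirit of Gromov's precompactness criterion. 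Until such a lemma is supplied, your argument is a correct reduction of the proposition to its essential content rather than a proof of it.
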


For every $n\in\N$ we define a mapping $\nu\:\cM_{[n]}\to\R^{n(n-1)/2}_{\infty}$ as follows. For $X=\{x_1,\ldots,x_n\}\in\cM_{[n]}$ consider all distances $|x_ix_j|$ between distinct points, sort these values in the ascending order, and by $\nu(X)$ we denote  the resulting vector from $\R^{n(n-1)/2}$ divided by $2$.

By a \emph{generic space\/} we mean each finite metric space such that all its nonzero distances are pairwise distinct, and all its triangle inequalities hold strictly.  By $\cM^\gen$ we denote the family of all generic spaces. Clearly that $\cM^\gen$ is everywhere dense in $\cM$. Also, we put $\cM_n^\gen=\cM_n\cap\cM^\gen$ and $\cM_{[n]}^\gen=\cM_{[n]}\cap\cM^\gen$.

Let $X=\{x_1,\ldots,x_n\}\in\cM$, $n\ge3$. Define $\dl(X)$ as the minimum of the following two values:
\begin{flalign*}
\indent&\min\bigl\{|x_ix_j|+|x_jx_k|-|x_ix_k|:\#\{i,j,k\}=3\bigr\},&\\
\indent&\min\Bigl\{\bigl||x_ix_j|-|x_px_q|\bigr|:\#\{i,j,p,q\}\ge3\Bigr\}.&
\end{flalign*}
For $n=2$ put $\dl(X)=|x_1x_2|$.

\begin{prop}[\cite{IvaIliadisTuzIsom}]\label{prop:isometry}
Let $X=\{x_1,\ldots,x_n\}\in\cM^\gen$. Put $\dl=\dl(X)/6$, $U=U_\dl(X)\ss\cM_n$, $N=n(n-1)/2$, $W=U_\dl\bigl(\nu(X)\bigr)\ss\R^N_\infty$. Then the mapping $\nu|_U\:U\to W$ is an isometry.
\end{prop}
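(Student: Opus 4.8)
The plan is to establish two things: that $\nu$ is well defined and distance preserving on $U$ — equivalently, that $U\ss\cM_{[n]}$ and $d_{GH}(X,Y)=\|\nu(X)-\nu(Y)\|$ for every $Y\in U$ — and that $\nu(U)=W$. For $n=2$ the assertion is immediate (a two-point space is determined by its diameter, and $d_{GH}$ of two such spaces is half the difference of diameters), so assume $n\ge3$. I will use throughout the standard correspondence description $d_{GH}(X,Y)=\frac12\inf_R\dis R$, the infimum over correspondences $R\ss X\x Y$, where $\dis R=\sup\bigl\{\bigl||xx'|-|yy'|\bigr|:(x,y),(x',y')\in R\bigr\}$; see~\cite{BurBurIva}.

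First I would show that, for $Y\in U$, every correspondence of small distortion contains the graph of a bijection. Let $m$ be the least positive distance of $X$. Applying the strict triangle inequality encoded in $\dl(X)$ to the triangle on the shortest edge of $X$ and any third point gives $\dl(X)\le m$, hence $2\dl=\dl(X)/3<m$. So if $R$ is a correspondence with $\dis R<2\dl$ and $(x_i,y),(x_j,y)\in R$ with $i\ne j$, then $m\le|x_ix_j|=\bigl||x_ix_j|-|yy|\bigr|\le\dis R<m$, a contradiction; thus each $y\in Y$ has a unique $X$-partner, giving a map $\s\:Y\to X$ that is onto because the $X$-projection of $R$ is all of $X$. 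Since $Y\in\cM_n$ forces $\#Y\le n=\#X$, the map $\s$ is a bijection, so $Y$ has exactly $n$ points (hence $\nu$ is defined on $U$) and the graph of $f:=\s^{-1}$ lies in $R$, whence $\dis f\le\dis R$. A routine limiting argument then upgrades $d_{GH}(X,Y)=\frac12\inf_R\dis R$ to $d_{GH}(X,Y)=\frac12\min\{\dis f:f\:X\to Y\text{ bijective}\}$, the minimum realized by some $f$ with $\dis f<2\dl$.

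Next I would use genericity to see that such an $f$ matches the edges in increasing order. List the distances of $X$ as $a_1<\cdots<a_N$ and write $b_k=|f(x_i)f(x_j)|$ where $\{x_i,x_j\}$ is the $k$-th shortest edge of $X$. Because two distinct index pairs span at least three indices, the definition of $\dl(X)$ gives $a_{k+1}-a_k\ge\dl(X)=6\dl$, while $|b_k-a_k|\le\dis f<2\dl$; hence $b_1<\cdots<b_N$, i.e.\ $(b_1,\dots,b_N)$ is precisely the distance vector of $Y$ in increasing order, so $\nu(Y)=(b_1/2,\dots,b_N/2)$ and $\dis f=\max_k|a_k-b_k|=2\|\nu(X)-\nu(Y)\|$ — a value not depending on $f$. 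Combining this with the previous step gives $d_{GH}(X,Y)=\|\nu(X)-\nu(Y)\|$ for all $Y\in U$, so $\nu|_U$ is an isometric embedding, and clearly $\nu(U)\ss W$.

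Finally, for surjectivity, fix $w=(w_1,\dots,w_N)\in W$. Genericity makes consecutive coordinates of $\nu(X)$ differ by at least $\dl(X)/2=3\dl$, and $\|w-\nu(X)\|<\dl$, so $w_1<\cdots<w_N$. I would then take $Y$ to be the metric space on the same underlying set $\{x_1,\dots,x_n\}$ in which the pair carrying the $k$-th shortest distance of $X$ has length $2w_k$: each distance is moved by less than $2\dl$, so all distances stay positive and every triangle inequality of $X$ — of slack at least $\dl(X)=6\dl$ — survives (strictly), hence $Y\in\cM_{[n]}$; its distances are the already-ordered $2w_1<\cdots<2w_N$, so $\nu(Y)=w$, and the identity map $X\to Y$ has distortion $2\|\nu(X)-w\|<2\dl$, giving $d_{GH}(X,Y)\le\|\nu(X)-w\|<\dl$ and $Y\in U$. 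This yields $\nu(U)=W$ and completes the proof. The step I expect to be the main obstacle is the second one — turning an arbitrary correspondence into a genuine bijection of point sets and then leveraging genericity to force the edge matching to be order preserving — together with the bookkeeping showing that the constant $6$ in $\dl=\dl(X)/6$ simultaneously validates all the strict inequalities above.
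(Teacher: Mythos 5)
The paper does not prove Proposition~\ref{prop:isometry} here --- it is imported from~\cite{IvaIliadisTuzIsom} as a citation --- so there is no in-paper argument to compare against. Your proof is correct and complete, and it follows the natural (and, as far as the cited reference goes, the standard) route: the bound $2\dl\le\dl(X)/3<\min_{i\ne j}|x_ix_j|$ forces every correspondence of distortion less than $2\dl$ to be the graph of a bijection, genericity (consecutive sorted distances differing by at least $6\dl$) forces that bijection to match edges in increasing order so that $\dis f=2\|\nu(X)-\nu(Y)\|$ independently of $f$, and the same slack estimates give surjectivity onto $W$; all the constant-checking (including $\dl(X)\le m$ via the triangle on the shortest edge) is sound.
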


\begin{rk}
The neighborhood $U=U_\dl(X)\ss\cM_n$ does not contain the spaces with less than $n$ points, thus the mapping $\nu$ is well-defined on $U$.
\end{rk}

\begin{prop}[\cite{IvaIliadisTuzIsom}]\label{prop:isometry-embedding}
Let $X$ be an arbitrary finite metric space consisting of $n$ points. By $k$ we denote the least integer such that $n\le k(k-1)/2$. Then $X$ can be isometrically embedded into $\cM$. Moreover, the embedding can be chosen in such a way that the image of $X$ is contained in $\cM_{[k]}$.
\end{prop}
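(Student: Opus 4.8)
The plan is to realize $X$ inside one of the Euclidean charts of $\cM$ furnished by Proposition~\ref{prop:isometry}. Put $N=k(k-1)/2$, so that $n\le N$ by the choice of $k$. The first step is to embed $X$ isometrically into $\R^N_\infty$: the Kuratowski embedding (Proposition~\ref{prop:Kuratowski}) gives an isometric $\chi\:X\to\R^n_\infty$, and appending $N-n$ zero coordinates yields an isometric inclusion $\R^n_\infty\hookrightarrow\R^N_\infty$ (zero coordinates do not affect the $\max$-norm). Composing, we obtain an isometric embedding $\iota\:X\to\R^N_\infty$; write $D=\diam\iota(X)<\infty$.

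Next I would select a chart that is large enough to accommodate a copy of $\iota(X)$. Fix any generic $k$-point metric space $Y_0\in\cM^\gen$ (such spaces exist; e.g.\ take $k$ points in general position in the plane). For $t>0$ the scaled space $tY_0$ is again generic, with $\dl(tY_0)=t\,\dl(Y_0)$ and $\nu(tY_0)=t\,\nu(Y_0)$; pick $t$ so large that $\dl:=\dl(tY_0)/6>D$, and set $Y=tY_0$. By Proposition~\ref{prop:isometry}, the map $\nu|_U\:U\to W$ is an isometry, where $U=U_\dl(Y)\ss\cM_k$ and $W=U_\dl\bigl(\nu(Y)\bigr)\ss\R^N_\infty$; moreover $U\ss\cM_{[k]}$ by the Remark following that proposition.

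Finally, fix $x_1\in X$ and let $\tau$ be the translation of $\R^N_\infty$ sending $\iota(x_1)$ to $\nu(Y)$; it is an isometry of $\R^N_\infty$, and for every $x\in X$ we have
$$
\bigl\|\tau(\iota(x))-\nu(Y)\bigr\|=\bigl\|\iota(x)-\iota(x_1)\bigr\|\le D<\dl ,
$$
so $\tau\bigl(\iota(X)\bigr)\ss W$. Therefore
$$
f:=(\nu|_U)^{-1}\c\tau\c\iota\:X\longrightarrow U\ss\cM_{[k]}\ss\cM
$$
is a composition of isometric embeddings and isometries, hence an isometric embedding of $X$ into $\cM$ whose image lies in $\cM_{[k]}$ — which is the assertion.

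I do not anticipate a serious obstacle, since Propositions~\ref{prop:Kuratowski} and~\ref{prop:isometry} carry the real weight. The one point that requires care is that the chart $W$ of Proposition~\ref{prop:isometry} comes with a fixed radius $\dl(Y)/6$ determined by $Y$; one therefore has to enlarge the generic space $Y$ by a homothety \emph{before} attempting to fit a copy of $\iota(X)$ inside $W$, and — since the embedding of $X$ must be isometric and not merely a similarity — the copy of $\iota(X)$ must be brought into $W$ by an isometry of $\R^N_\infty$ (a translation), rather than by rescaling.
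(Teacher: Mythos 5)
Your argument is correct and follows precisely the route this paper attributes to \cite{IvaIliadisTuzIsom} (the proposition is stated here as a citation, with no proof given in the text, but the Introduction describes exactly this method): Kuratowski embedding into $\R^n_\infty$, padding with zero coordinates up to $\R^{k(k-1)/2}_\infty$, and pulling the image back through the surjective local isometry $\nu$ of Proposition~\ref{prop:isometry} after enlarging a generic $k$-point space by a homothety. The one delicate point, that the copy of $\iota(X)$ must be brought into the chart $W$ by a translation (an isometry of $\R^{k(k-1)/2}_\infty$) while only the generic space $Y_0$ is rescaled, is handled correctly.
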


\section{Shortest Trees in Small Neighborhoods of Generic Spaces}
\markright{\thesection.~Shortest Trees in Small Neighborhoods of Generic Spaces}

The next theorem reduces the Steiner Problem in the Gromov--Hausdorff space for boundaries lying in sufficiently small neighborhoods of generic spaces, to the investigation of minimal fillings for finite metric spaces.

\begin{thm}
Let $X\in\cM_{[n]}^\gen$. Then for every $N\in\N$ there exists a neighborhood $U_r(X)\ss\cM_n$ such that for any set $M\ss U_r(X)$ consisting of at most $N$ points, each its Steiner minimal tree, that exists by Proposition~$\ref{prop:SMTexistance}$, is a minimal filling.
\end{thm}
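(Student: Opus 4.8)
The plan is to move the problem, via the local isometry $\nu$ of Proposition~\ref{prop:isometry}, into the fixed normed space $\R^{N'}_\infty$ with $N'=n(n-1)/2$, where shortest trees are controlled by Propositions~\ref{prop:existance} and~\ref{prop:Ovsyannikov}, and then to transport the resulting minimal filling back into $\cM$. First set $r=\dl(X)/6$. By Proposition~\ref{prop:isometry}, $\nu$ restricts to an isometry of $U:=U_r(X)\ss\cM_n$ onto the $\|\cdot\|_\infty$-ball $W:=U_r\bigl(\nu(X)\bigr)\ss\R^{N'}_\infty$, which is an axis-parallel open box; hence for any finite $M\ss U$ the image $\nu(M)$, together with its axis-parallel bounding box, lies in $W$. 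Since $M$ and $\nu(M)$ are isometric, $\mf(M)=\mf\bigl(\nu(M)\bigr)$, and $\SMT(M,\cM)\ne\0$ by Proposition~\ref{prop:SMTexistance}. (In fact $r$ can be chosen independent of $N$, so the bound on the number of points of $M$ in the statement is not really used.)

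The heart of the matter is the equality $\smt(M,\cM)=\mf(M)$. One inequality, $\mf(M)\le\smt(M,\cM)$, is immediate, since $\mf(M)$ is an infimum over isometric embeddings of $M$ and the inclusion $M\hookrightarrow\cM$ is one of them. For the opposite inequality I would realize a minimal filling of $M$ inside the chart $U$: by Proposition~\ref{prop:existance} there is a shortest tree on $\nu(M)$ in $\R^{N'}_\infty$, and by Proposition~\ref{prop:Ovsyannikov} it is a minimal filling for $\nu(M)$, so its length equals $\mf\bigl(\nu(M)\bigr)=\mf(M)$. The key technical step is to arrange such a shortest tree inside $W$: the coordinatewise map that clamps the $j$-th coordinate of each point to the $j$-th coordinate range of $\nu(M)$ is $1$-Lipschitz for $\|\cdot\|_\infty$ and fixes $\nu(M)$ pointwise, so applying it to the vertices of a shortest tree does not increase the length and, after deleting the zero-length edges it may create, yields again a shortest tree all of whose vertices lie in the bounding box of $\nu(M)$, hence in $W$. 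Pulling this tree back by the isometry $\nu^{-1}\colon W\to U$ produces a connected graph on $\cM$ connecting $M$ of length $\mf(M)$, whence $\smt(M,\cM)\le\mf(M)$. Combining the two inequalities, $\smt(M,\cM)=\mf(M)$; therefore every $T\in\SMT(M,\cM)$ has length $\mf(M)$, and, applying the definition of a minimal filling with ambient space $\cM$ and the inclusion of $M$, every such $T$ is a minimal filling for $M$.

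\noindent\textbf{Main obstacle.}
The one genuine difficulty is the confinement step above. A shortest tree for $M$ in $\cM$ may a priori have Steiner points that are arbitrary compact metric spaces far from $\cM_n$, so it is not evident that passing to $\cM$ cannot shorten the network below what one sees in the Euclidean chart. The argument shows this cannot happen, because a minimal filling of $\nu(M)$ can be found already inside $W=\nu(U)$: an $\ell_\infty$-Steiner tree retracts into the axis-parallel bounding box of its boundary without lengthening, and the retracted tree is then carried back into $\cM$ verbatim by $\nu^{-1}$.
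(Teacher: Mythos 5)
Your argument is correct, and it reaches the conclusion by a genuinely different route at the one place where the paper has to work: confining a shortest tree for $\nu(M)$ inside the chart $W=\nu\bigl(U_r(X)\bigr)$. The paper does this by brute force on the radius: it sets $r=\dl/\bigl(1+2(N-1)\bigr)$, observes that any spanning tree on a set of at most $N$ points of pairwise distance $<2r$ has length $<2r(N-1)$, hence any $G=(V,E)\in\SMT\bigl(\nu(M),\R^{n(n-1)/2}_\infty\bigr)$ satisfies $\diam V<2r(N-1)$, and concludes $V\ss U_\dl\bigl(\nu(X)\bigr)$ by the triangle inequality. This shows that \emph{every} shortest tree on $\nu(M)$ lies in $W$, but the radius necessarily shrinks with $N$ --- which is exactly why the hypothesis ``at most $N$ points'' appears in the statement. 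You instead keep $r=\dl(X)/6$ fixed and use the coordinatewise clamping retraction onto the axis-parallel bounding box of $\nu(M)$, which is $1$-Lipschitz for $\|\cdot\|_\infty$, fixes the boundary, and hence carries \emph{some} shortest tree into $W$ without lengthening it; since the final step only needs one connected graph of length $\mf(M)$ connecting $M$ inside $\cM$ to force $\smt(M,\cM)=\mf(M)$, this weaker confinement suffices, and it buys a neighborhood independent of $N$ (so the cardinality hypothesis is indeed superfluous --- a genuine strengthening of the stated theorem). Two small points to tidy up: zero-length edges produced by the clamping should be \emph{contracted} (identifying their endpoints), not deleted, since deletion would disconnect the tree --- though in fact, because the clamped graph still connects $\nu(M)$ and cannot be shorter than $\smt\bigl(\nu(M),\R^{n(n-1)/2}_\infty\bigr)$, every edge keeps its length and no genuine degeneration occurs on edges; and you should note explicitly that the bounding box of $\nu(M)$ lies in $W$ because $W$, being an open $\|\cdot\|_\infty$-ball, is an open coordinate box containing $\nu(M)$, so it contains each coordinate interval $[\min_j,\max_j]$. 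The remainder of your argument --- Proposition~\ref{prop:existance} for existence in $\R^{n(n-1)/2}_\infty$, Proposition~\ref{prop:Ovsyannikov} to identify the shortest tree there as a minimal filling of length $\mf(M)$, the pullback by $\nu^{-1}$, the trivial inequality $\mf(M)\le\smt(M,\cM)$, and the observation that all trees in $\SMT(M,\cM)$ share the length $\mf(M)$ and are therefore minimal fillings by definition --- coincides with the paper's.
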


\begin{proof}
Let $\nu\:\cM_{[n]}\to\R^{n(n-1)/2}_{\infty}$ be the mapping  defined above. Put $Y=\nu(X)$ and $R=\R^{n(n-1)/2}_{\infty}$. By Proposition~\ref{prop:isometry}, there exists $\dl>0$ such that $\nu$ is an isometry between $U=U_\dl(X)\ss\cM_n$ and $W=U_\dl(Y)\ss R$.

Put $r=\dl/\big(1+2(N-1)\big)$. Let $A\ss U_r(Y)$, then the distance between any pair of points from $A$ is less than $2r$. Suppose also that $A$ consists of at most $N$ points, then the length of any tree with vertex set $A$ is less than $2r(N-1)$, hence, $\smt(A,R)<2r(N-1)$.

Let $G=(V,E)\in\SMT(A,R)$. Then $\diam V\le|G|<2r(N-1)$, where $\diam V$ stands for the diameter of the set $V$. Since $A\ss V$, then the distance from a point that belongs to $V$, say from $p\in A$, to the set $Y$ is less than $r$, therefore, for any $v\in V$ we have
$$
|Yv|\le|Yp|+|pv|\le|Yp|+\diam V<r+2r(N-1)=\dl.
$$
Thus, we have shown that for each $G=(V,E)\in\SMT(A,R)$ it holds $V\ss W$.

Now, choose an arbitrary $M\ss U_r(X)$ and put $A=\nu(M)$. Then $A\ss U_r(Y)$. As it is proved above, for any $G=(V,E)\in\SMT(A,R)$ it holds $V\ss U_\dl(Y)$, therefore, $V'=\nu^{-1}(V)$ is isometric to $V$, and $V'\sp M$.

Let $G'=(V',E')$ be the unique graph such that $\nu^{-1}\:V\to V'$ is an isomorphism between $G$ and $G'$. Then $|G'|=|G|$, and $G'$ is a minimal filling for $M$, because $G$ is a minimal filling for $A$ by Proposition~\ref{prop:Ovsyannikov}. Since the lengths of all Steiner minimal trees on the same boundary are equal to each other, then all shortest trees on $M\ss\cM$ are minimal fillings for $M$.
\end{proof}

The next result shows that each minimal filling can be realized as a shortest tree in the Gromov--Hausdorff space.

\begin{thm}\label{thm:MFRealization}
Let $X$ be an arbitrary finite metric space, and suppose that $G=(V,E)$ is some minimal filling for $X$. Then there exist $N\in\N$ and isometric embedding $f\:X\to\cM_N$ such that
\begin{enumerate}
\item\label{thm:MFRealization:1} the mapping $f$ can be extended to an isometric embedding $F\:V\to\cM_N$,
\item\label{thm:MFRealization:2} the graph $F(G):=\bigl(F(V),F(E)\bigr)$ is a Steiner minimal tree in $\cM$ connecting $f(X)$, i.e.,  $F(G)\in\SMT\bigl(f(X),\cM\bigr)$, and
\item\label{thm:MFRealization:3} each Steiner minimal tree in $\cM$ connecting $f(X)$ is a minimal filling.
\end{enumerate}
\end{thm}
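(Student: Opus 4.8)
The plan is to combine the previous theorem with the Kuratowski embedding and the isometric embedding of finite metric spaces into $\cM$. First I would recall from Proposition~\ref{prop:isometry-embedding} and its proof that any finite metric space embeds isometrically into some $\cM_{[k]}$; the mechanism is the Kuratowski embedding into $\R^k_\infty$ (Proposition~\ref{prop:Kuratowski}) followed, locally near a generic point, by the inverse of the isometry $\nu$ of Proposition~\ref{prop:isometry}. The key observation is that a minimal filling $G=(V,E)$ for $X$ is, by the Proposition characterizing minimal fillings, a minimal spanning tree on the finite metric space $V\sp X$ with $|G|=\mf(X)$. So it suffices to embed the \emph{whole} space $V$, not just $X$, and to arrange that the embedded $V$ lands in a small neighborhood of a generic point of $\cM$ so that the previous theorem applies with the number of points of $V$ playing the role of $N$.

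The concrete steps I would carry out: (1) Apply the Kuratowski embedding to $V$, getting an isometric copy of $V$ inside $\R^m_\infty$ where $m=\#V$. (2) By Proposition~\ref{prop:Ovsyannikov}, any shortest tree in $\R^m_\infty$ on this copy of $V$ is a minimal filling for $V$; since $G$ is a minimal spanning tree on $V$ of length $\mf(X)\le\mf(V)$ — wait, one must be careful: a minimal filling of $X$ need not be a minimal filling of $V$ — so instead I would realize $G$ directly: place the vertices $V$ in $\R^m_\infty$ via Kuratowski, and note $\mst$ of this set equals $\mst(V)$, and the tree $G$ drawn on these points has length $|G|=\mst(V)$; hence $G$ drawn in $\R^m_\infty$ is a minimal spanning tree, and it is simultaneously a minimal filling for $X=$ (the marked subset), since $|G|=\mf(X)$ by hypothesis. (3) Now translate/scale so that this finite configuration lies inside a ball $U_r(Y)$ around a generic point $Y\in\R^N_\infty$ for suitable large $N$; concretely, pick a generic $n$-point space, use Proposition~\ref{prop:isometry} to get the local isometry $\nu$ between a $\dl$-ball $U\ss\cM_n$ and a $\dl$-ball $W\ss\R^N_\infty$, choose $Y$ generic inside $W$, and observe that an isometric copy of our configuration (it is finite, hence bounded) can be placed inside $U_r(Y)$ for $r$ as small as we like after rescaling — but rescaling changes the metric space $X$, so instead I would take $N$ large enough and $Y$ with $\dl(Y)$ large enough that the configuration of diameter $\le|G|$ fits inside $U_r(Y)$ with $r$ the radius supplied by the previous theorem for the value $\#V$. (4) Transport everything through $\nu^{-1}$ to get $F\:V\to\cM_N$ isometric, set $f=F|_X$, let $F(G)$ be the image tree; it has length $|G|=\mf(X)$, so it is a minimal filling for $f(X)$, hence by Proposition characterizing $\mf$ and Proposition~\ref{prop:SMTexistance} it realizes $\smt\bigl(f(X),\cM\bigr)$, giving (\ref{thm:MFRealization:1}) and (\ref{thm:MFRealization:2}). (5) Finally, (\ref{thm:MFRealization:3}) is immediate from the previous theorem applied to the neighborhood $U_r(X_{\mathrm{gen}})$ containing $f(X)$, since $f(X)$ has at most $\#V\le N_0$ points for the $N_0$ used to pick $r$.

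The main obstacle I anticipate is the bookkeeping in step (3): one must choose the ambient dimension and the generic center $Y$ \emph{before} knowing how large $r$ must be, yet $r$ (from the previous theorem) depends only on the generic space $X$ and the point-count bound $N$, not on the actual configuration, so the dependencies can be untangled by first fixing the point-count $\#V$, then invoking the previous theorem to get $r=r(X_{\mathrm{gen}},\#V)$, then checking that a generic $X_{\mathrm{gen}}$ can be chosen with $\dl(X_{\mathrm{gen}})$ large enough (equivalently, after the identification $\nu$, that the $\dl$-ball $W$ in $\R^N_\infty$ is large enough) to contain an isometric copy of our configuration scaled to fit in $U_r$. Since scaling $X$ by a positive constant scales $\mf(X)$, $\mst$, and all the trees by the same constant and preserves the property "$G$ is a minimal filling for $X$", and since statement~\ref{thm:MFRealization} is itself scale-invariant, I may WLOG rescale $X$ so that $\diam V$ and $|G|$ are as small as needed; this removes the only real difficulty. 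The remaining verifications — that $\nu^{-1}$ carries minimal spanning trees to minimal spanning trees because it is an isometry on the relevant finite set, and that length is preserved — are routine, exactly as in the proof of the previous theorem.
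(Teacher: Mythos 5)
Your argument is correct, but it is considerably heavier than the paper's, and the difference is concentrated in how you handle items~(\ref{thm:MFRealization:2}) and~(\ref{thm:MFRealization:3}). For item~(\ref{thm:MFRealization:1}) you essentially re-prove Proposition~\ref{prop:isometry-embedding} for the whole space $V$, with the extra control that the image lands inside the neighborhood $U_r$ of a generic space furnished by the preceding theorem; the paper simply applies Proposition~\ref{prop:isometry-embedding} to $V$ and sets $f=F|_X$. That extra control is needed only for your treatment of item~(\ref{thm:MFRealization:3}), and it turns out to be unnecessary: once one knows $|F(G)|=|G|=\mf(X)=\mf\bigl(f(X)\bigr)$ and that every connected graph in $\cM$ connecting $f(X)$ has length at least $\mf\bigl(f(X)\bigr)$ (the universal lower bound you yourself invoke in step~(4)), item~(\ref{thm:MFRealization:2}) follows, and item~(\ref{thm:MFRealization:3}) follows from the single remark that all trees in $\SMT\bigl(f(X),\cM\bigr)$ have the same length $\smt\bigl(f(X),\cM\bigr)=\mf\bigl(f(X)\bigr)$, which is exactly the paper's definition of a minimal filling; no appeal to the preceding theorem, and hence none of the placement bookkeeping of your step~(3), is required. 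Two loose ends in your version, both repairable: the claimed scale-invariance of the statement rests on the unstated fact that the dilation $Z\mapsto\l Z$ is a self-similarity of $(\cM,d_{GH})$ preserving each $\cM_{[k]}$ --- the alternative you also mention, enlarging the generic center so that $\dl(X_\gen)$, and hence $r$, exceeds $\diam V$, avoids this entirely and is cleaner; and in step~(4) the inference ``$|F(G)|=\mf(X)$, so it is a minimal filling, hence it realizes $\smt$'' should be run in the opposite order: first $\smt\bigl(f(X),\cM\bigr)\ge\mf\bigl(f(X)\bigr)=|F(G)|$ forces $F(G)\in\SMT\bigl(f(X),\cM\bigr)$, and only then is $F(G)$ a minimal filling in the sense of the definition. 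What your longer route buys is a marginally stronger conclusion --- that $f(X)$ can be placed in a neighborhood where \emph{every} boundary of bounded cardinality has all its shortest trees equal to minimal fillings --- but the theorem does not ask for that.
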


\begin{proof}
To verify item~(\ref{thm:MFRealization:1}) it suffices to use Proposition~\ref{prop:isometry-embedding} and to construct an isometric embedding $F$ of $V$ into $\cM$, and then to define $f$ as the restriction of $F$ onto $X$. Item~(\ref{thm:MFRealization:2}) follows from the fact that each connected graph connecting a finite subset of a metric space is not shorter than a minimal filling for this subset. To prove item~(\ref{thm:MFRealization:3}) it suffices to notice that all Steiner minimal trees on the same boundary have the same length.
\end{proof}

Theorem~\ref{thm:MFRealization} and~\cite{ITMinFil} imply the following result.

\begin{cor}
We have $\sr(\cM)=1/2$ and $\sgr(\cM)=1/2$.
\end{cor}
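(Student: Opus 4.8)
The plan is to prove the two inequalities ``$\ge1/2$'' (valid, in fact, in any ambient space) and ``$\le1/2$'' separately for each of $\sr(\cM)$ and $\sgr(\cM)$, and then combine them.

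\emph{Lower bounds.} First I would recall from~\cite{ITMinFil} the universal estimate $\mf(M)\ge\frac12\,\mst(M)$, valid for every finite metric space $M$ with at least two points; this is precisely the statement that the Steiner--Gromov ratio of the class of all finite metric spaces is at least $1/2$. Next, for any finite $M\ss\cM$ the number $\smt(M,\cM)$ occurs among the quantities over which the infimum defining $\mf(M)$ is taken (the one corresponding to the inclusion $M\ss\cM$), so $\smt(M,\cM)\ge\mf(M)$. Hence, for every finite $M\ss\cM$ with at least two points,
$$
\sr(M,\cM)=\frac{\smt(M,\cM)}{\mst(M)}\ \ge\ \frac{\mf(M)}{\mst(M)}=\sgr(M)\ \ge\ \frac12 ,
$$
and taking the infimum over all such $M$ yields $\sr(\cM)\ge1/2$ and $\sgr(\cM)\ge1/2$.

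\emph{Upper bounds.} Here I would invoke the one-dimensional minimal filling theory of~\cite{ITMinFil}, which exhibits, for every $\e>0$, a finite metric space $M$ with $\sgr(M)=\mf(M)/\mst(M)<1/2+\e$ --- for instance an $n$-point space all of whose nonzero distances are equal, for which $\mst(M)=n-1$ and $\mf(M)=n/2$, so that $\sgr(M)=\frac{n}{2(n-1)}\to1/2$ as $n\to\infty$. Fixing such an $M$ together with a minimal filling $G=(V,E)$ of it, I would apply Theorem~\ref{thm:MFRealization}: it produces $N\in\N$ and isometric embeddings $f\:M\to\cM_N$ and $F\:V\to\cM_N$ extending $f$, with $F(G)\in\SMT\bigl(f(M),\cM\bigr)$. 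Since $F$ is isometric, $|F(G)|=|G|=\mf(M)$, so $\smt\bigl(f(M),\cM\bigr)=\mf(M)$, while $\mst\bigl(f(M)\bigr)=\mst(M)$ because $f$ is isometric. Therefore $\sr\bigl(f(M),\cM\bigr)=\mf(M)/\mst(M)<1/2+\e$ and likewise $\sgr\bigl(f(M)\bigr)=\sgr(M)<1/2+\e$. As $\e$ was arbitrary, $\sr(\cM)\le1/2$ and $\sgr(\cM)\le1/2$, and combined with the lower bounds this gives $\sr(\cM)=\sgr(\cM)=1/2$.

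\emph{Main obstacle.} The substantive content is already packaged in Theorem~\ref{thm:MFRealization} (needed for the $\sr$-part, since one must exhibit an ambient space in which a minimal filling becomes a genuine shortest tree) and in the evaluation of $\sgr$ for the near-equilateral spaces in~\cite{ITMinFil}; what is left is bookkeeping. The one point deserving care is that $\mf$ and $\mst$ depend only on the isometry type of $M$, so an isometric embedding transports the Steiner--Gromov ratio unchanged, whereas $\smt(M,X)$ genuinely depends on the ambient $X$ --- which is exactly why part~(\ref{thm:MFRealization:2}) of Theorem~\ref{thm:MFRealization} is required to pin $\smt\bigl(f(M),\cM\bigr)$ down to $\mf(M)$, rather than merely to bound it from below.
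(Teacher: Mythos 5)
Your proof is correct and follows essentially the same route as the paper: the upper bound comes from the same equilateral spaces $\D_n$ with $\mst(\D_n)=n-1$ and $\mf(\D_n)=n/2$, realized in $\cM$ via Theorem~\ref{thm:MFRealization} so that $\smt\bigl(f(\D_n),\cM\bigr)=\mf(\D_n)$. The only difference is that you spell out the lower bounds ($\mf\ge\frac12\mst$ and $\smt(M,\cM)\ge\mf(M)$), which the paper leaves implicit in its citation of~\cite{ITMinFil}.
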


\begin{proof}
Indeed, if $\D_n$ is an $n$-point metric space with distances $1$ between any pair of its distinct points, then $\mst(\D_n)=n-1$. By~\cite{ITMinFil}, a minimal filling of $\D_n$ is a star-like graph $\bigl(\D_n\cup\{v\},E\bigr)$ with edges $vx_i\in E$, $x_i\in\D_n$, of the same length  $1/2$. Thus, $\mf(\D_n)=n/2$. Due to Theorem~\ref{thm:MFRealization}, embed $\D_n$ isometrically into $\cM$ in such a way that Steiner minimal trees on the image $M$ of $\D_n$ are minimal fillings. Then $\smt(M,\cM)=\mf(M)=\mf(\D_n)=n/2$ and $\mst(M)=n-1$, hence, $\sr(M,\cM)=\frac{n/2}{n-1}=\sgr(M)$ that tends to $1/2$ as  $n$ tends to infinity.
\end{proof}

\begin{rk}
In~\cite{IvaTuzIrreducible} it was shown that $\ssr(\cM)<0.857$. It would be interesting to get the exact value of $\ssr(\cM)$.
\end{rk}

\end{document}